\theoremstyle{definition}
\newtheorem{defn}{Definition}
\newtheorem{thm}[defn]{Theorem}
\newtheorem{lem}[defn]{Lemma}
\newtheorem{prop}[defn]{Proposition}
\newtheorem{rem}[defn]{Remark}
\newtheorem{open}{Open Problem}
\newcommand{\om}{\otimes_{min}}
\newcommand{\rank}{\texttt{rank\,}}
\newcommand{\n}{\|}
\newcommand{\ncb}{\|_{cb}}
\begin{document}

\title{Duality of uniform approximation property in operator spaces}

\author{Yanqi Qiu} 
\address{Yanqi Qiu: Aix-Marseille Universit{\'e}, Centrale Marseille, CNRS, I2M, UMR7373,  39 Rue F. Juliot Curie 13453, Marseille}
\email{yqi.qiu@gmail.com}

\begin{abstract}
The duality of uniform approximation property for Banach spaces is well known. In this note, we establish, under the assumption of local reflexivity,  the duality of uniform approximation property in the category of operator spaces.
\end{abstract}

\maketitle

\section{Introduction}
We say that an operator space $E$ has the uniform approximation property in operator space sense (in short OUAP), if there is a constant $K \geq 1$ and a function $k(n)$ such that , for any $n$-dimensional subspace $M$ of $E$, there exists a finite rank operator $T \in CB(E)$, such that $$\n T \ncb \leq K, \rank T \leq k(n) \text{ and } T|_M = id_M.$$ We will say that $E$ has $( K, k(n) )$ - OUAP, if the property holds for a constant $K$ and a function $k(n)$.

In this note, we show that OUAP pass to the dual under a milder condition. 

\begin{thm}\label{mainthm}
If $E$ (resp. $E^*$) has the $(K,k(n))$-OUAP, and $E^*$ (resp. $E$) is a locally reflexive operator space, then $E^{*}$ (resp. $E$) has the $$\left( \frac{1}{1-1/m} \left([(1+\varepsilon)K]^{1+m} + \frac{1}{m}\right), m^{2/m}[(1+\varepsilon)K]^{2 +2/m}k(n)^{1+1/m}\right)\text{-}\textrm{OUAP},$$ for all $\varepsilon > 0$ and all integers $m >1$. 
\end{thm}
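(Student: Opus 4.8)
The plan is to transport the classical Banach-space proof of the duality of the uniform approximation property into the operator-space category, invoking the hypothesis of operator local reflexivity at exactly the point where the Banach argument silently uses the principle of local reflexivity (which is automatic for Banach spaces but not for operator spaces). I will treat the first formulation, so $E$ has $(K,k(n))$-OUAP and $E^*$ is locally reflexive, and I want $E^*$ to have OUAP; the second formulation is symmetric, using the embedding $E \hookrightarrow E^{**}$ and local reflexivity of $E$ in place of that of $E^*$. Fix an $n$-dimensional subspace $N \subseteq E^*$ and $\varepsilon > 0$; the goal is to produce a finite-rank $S \in CB(E^*)$ with $S|_N = \mathrm{id}_N$ whose $\|S\|_{cb}$ and $\rank S$ are bounded by the stated quantities.

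The basic dictionary is that taking adjoints turns the problem of fixing a subspace $M \subseteq E$ into the problem of fixing the quotient $E^*/M^\perp$, which is completely isometrically identified with $M^*$: if $T \in CB(E)$ has $T|_M = \mathrm{id}_M$, then $T^* \in CB(E^*)$ satisfies $\|T^*\|_{cb} = \|T\|_{cb}$, $\rank T^* = \rank T$, and $q_{M^\perp} T^* = q_{M^\perp}$, so that $T^*$ descends to the identity of $E^*/M^\perp$. First I would pick a subspace $M \subseteq E$ with $\dim M \le n$ placing $N$ in approximate duality with $M^*$ via the restriction $q_{M^\perp}|_N$, and apply the $(K,k(n))$-OUAP of $E$ to obtain $T$ with $\|T\|_{cb} \le (1+\varepsilon)K$, $\rank T \le k(n)$, and $T|_M = \mathrm{id}_M$. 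Then every $\phi \in N$ satisfies $T^*\phi \equiv \phi \pmod{M^\perp}$, and the remaining task is to correct this $M^\perp$-defect, i.e. to convert ``fixing the quotient $M^*$'' into ``fixing the subspace $N$''. Keeping $\dim M \le n$ throughout, so that $k(n)$ and not $k$ of a larger integer is all that enters the final estimate, is one of the quantitative constraints the construction must respect.

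This conversion is the main obstacle, and it is precisely where operator local reflexivity of $E^*$ is needed. I would use it to manufacture a completely bounded approximate lifting $L$ of the relevant finite-dimensional quotient data back into $E^*$, matching the inclusion of $N$ up to a controllably small cb-error, so that a composition of the shape $L\, q_{M^\perp}\, T^*$ fixes $N$ to within an $\varepsilon$-perturbation while its cb-norm stays bounded by a power of $(1+\varepsilon)K$. In the Banach category the analogous lifting is free from the principle of local reflexivity; in the operator-space category it genuinely requires the hypothesis, which is why local reflexivity cannot be dropped.

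Finally I would upgrade approximate fixing to exact fixing and optimize the constants through a free integer parameter $m$. The passage from $\|\mathrm{id}_N - S_0|_N\|_{cb} \le 1/m$ to an operator fixing $N$ exactly is a Neumann-series/geometric correction, and it is responsible for the factor $\tfrac{1}{1-1/m}$ and the additive $\tfrac1m$ in the norm bound. The exponents $[(1+\varepsilon)K]^{1+m}$ in the norm and $k(n)^{1+1/m}$, $m^{2/m}$, $[(1+\varepsilon)K]^{2+2/m}$ in the rank I expect to come from an $m$-step construction routing $N$ through subspaces of geometrically increasing dimension and composing the resulting maps, so that the norm accumulates as a product of about $m$ factors of size $(1+\varepsilon)K$ while the rank of the composition is governed by a geometric-mean optimization that produces the $1/m$-th power; optimizing the common ratio over $m > 1$ then yields the stated family. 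The two trade-offs pull against each other, since larger $m$ improves the rank exponent toward $1$ but drives the norm toward infinity, which is exactly why the theorem is stated uniformly for all $m > 1$ rather than at a single optimal value.
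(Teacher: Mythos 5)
There is a genuine gap at the core of your plan, namely at the step you yourself identify as ``the main obstacle''. You propose to place $N \subseteq E^*$ in $(1+\varepsilon)$-approximate duality with an $n$-dimensional $M \subseteq E$ and then to ``manufacture'' a cb-controlled lifting $L$ so that $L\, q_{M^\perp}\, T^*$ nearly fixes $N$. But asking the restriction map $N \to M^*$ to be a $(1+\varepsilon)$-complete isomorphism with $\dim M = n$ is the same (by taking adjoints) as asking the complete quotient map $E \to N^* \cong E/{}^\perp N$ to admit an almost isometric lifting on an $n$-dimensional subspace, and this fails already at the Banach-space level: finite-dimensional isometric quotients of a space need not embed back into it almost isometrically (e.g.\ quotients of $\ell_1$), so no choice of $M$ with $\dim M \le n$ need norm $N$ up to $(1+\varepsilon)$. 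A duality constant of order $n$ \emph{is} achievable (biorthogonal lifting plus Auerbach), but unlike the $n$-dependent losses in the paper's proof --- which are multiplied by a parameter $\mu$ that can be taken arbitrarily small after $n$ is fixed --- this loss is not removable and would contaminate the final cb-norm bound, which must be independent of $n$. Your use of the hypothesis is also misplaced: local reflexivity of $E^*$ moves finite-dimensional subspaces of $E^{***}=(E^*)^{**}$ into $E^*$, whereas the duality data you need lives in $E^{**}$ and bringing it into $E$ would require local reflexivity of $E$, which is not assumed in this direction. The paper avoids direct duality entirely: Lemma~\ref{dual_lemma} (following Mascioni) runs a Hahn--Banach separation argument in $\ell_\infty^n$ of the relevant space against a \emph{convex} class of operators, local reflexivity of $E^*$ being used exactly to pull the finite-dimensional range $R(S^*) \subseteq E^{***}$ of an operator $S$ on $E^{**}$ back into $E^*$. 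This in turn forces a second ingredient you omit: the lemma consumes OUAP of $(E^*)^* = E^{**}$, so the direction you chose requires first passing OUAP from $E$ to $E^{**}$, which the paper obtains from stability of OUAP under ultraproducts; the two directions of the theorem are genuinely asymmetric, not ``symmetric'' as you assert.

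Independently of the above, your proposal never confronts the convexity obstruction that dictates the whole architecture of the paper's proof: any separation or limiting argument cannot preserve the constraint $\rank T \le k(n)$, because the set of operators of rank at most $k(n)$ is not convex. This is why the paper introduces the $1$-injective ideal norm $\delta$ (factorization through $OH$), notes that finite rank gives $\delta(T) \le \|T\|_{cb}\sqrt{\rank T}$ (Remark~\ref{equi}), and proves the converse quantitative passage $\delta$-OUAP $\Rightarrow$ OUAP in Lemma~\ref{equi_lem} by writing $T^{m+1} = A(BA)^m B$, estimating complete approximation numbers via $\|(BA)^m\|_{S_{2/m}} \le \|BA\|_{S_2}^m = \pi_{2,oh}(BA)^m$, and inverting $id_E - T^{m+1} + S$ by a Neumann series. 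It is this spectral/Schatten-norm argument --- not an ``$m$-step construction routing $N$ through subspaces of geometrically increasing dimension'', which corresponds to no workable step --- that produces every $m$-dependent constant in the statement, including the exponents in $[(1+\varepsilon)K]^{1+m}$ and $m^{2/m}[(1+\varepsilon)K]^{2+2/m}k(n)^{1+1/m}$. Without the ideal-norm detour, your scheme has no mechanism for controlling rank through the limiting and correction steps, so the stated bounds are not attainable along your route.
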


For simplicity,  the locally reflexive in this note will always mean locally reflexive with constant 1. However, after a suitable modification of constants,  Theorem \ref{mainthm} still holds if we use locally reflexive with constant $\lambda > 1$.
 
It is not known whether we can drop the assumption on the local reflexivity in Theorem \ref{mainthm}. We formulate it as Open Problem \ref{o1} .

\section{The main result}
Given an operator ideal norm $\alpha$, we say that an operator space $E$ has $\alpha$ - OUAP, if in the definition of OUAP, the condition $\rank T \leq k(n)$ is replaced by the condition $\alpha(T) \leq k(n)$. We will say that $E$ has  $( K, k(n) )$ - $\alpha$ - OUAP, if the property holds for a constant $K$ and a function $k(n)$.

Let $E$ be an operator space and let $Y$ be a Banach space. Recall that an operator $u: E \rightarrow Y$ is called $(2,oh)$-summing if there is a constant $C$ such that for all finite sequences $(x_i)$ in $E$, we have
\begin{displaymath}
\sum_i \n u(x_i)\n^2 \leq C^2 \Big \n \sum_i x_i \otimes \bar{x_i} \Big \n_{E \om \overline{E}},
\end{displaymath}
and we denote by $\pi_{2,oh}(u)$ the smallest constant $C$ for which this holds.

Given an operator ideal norm, we define $\alpha^d$ the dual ideal norm by $$\alpha^d(T) = \alpha (T^*).$$ The operator ideal norm $\alpha$ is said to be 1-injective, if for any operator $u : E \rightarrow F$ and any completely isometric inclusion $i: F \hookrightarrow G,$ we have $$\alpha(T) = \alpha(i \circ T).$$ 

For an operator $T: E \rightarrow F$ and any integer $i \geq 1$, the $i$-th complete approximation number $b_i(T)$ of $T$ is defined by
\begin{displaymath}
b_i(T) = \inf\{\n T - S \ncb : S \in CB(E, F), \rank S < i\}.
\end{displaymath}
\begin{rem}
If $E$ is a homogeneous operator space, i.e., for all $T: E \rightarrow E$, we have $\n T \ncb = \n T \n$, then $b_i(T) = a_i(T)$, where $a_i(T)$ stands for the usual $i$-th approximation number of $T$. In particular, since the Piser's operator Hilbert space $OH$ is homogeneous, we have $b_i(T) = a_i(T)$ for any $T \in CB(OH) = B(OH)$.
\end{rem}

Let us recall the notion of locally reflexivity for operator spaces (see \cite{OS}). An operator space $E$ is called locally reflexive, if for any finite-dimensional operator space $L$, the natural linear isomorphism
\begin{displaymath}
CB(L, E^{**}) \rightarrow CB(L, E)^{**}
\end{displaymath}
is isometric.


The following lemma is an immediate generalisation of lemma 1 in the article \cite{UAP}. 
\begin{lem}\label{dual_lemma}
Let $\alpha$ be an 1-injective operator ideal norm. If $E$ be a locally reflexive operator space, and $E^*$ has $\Big( K, k(n)\Big)$ - $\alpha^d$ - OUAP, then $E$ has $\Big( K(1+\varepsilon), k(n)(1+ \varepsilon)\Big)$ - $\alpha$ - OUAP, for all $\varepsilon > 0$.
\end{lem}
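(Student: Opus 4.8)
The plan is to dualise the approximation problem on $E$ to one on $E^*$, solve it there with the given $\alpha^d$-OUAP, and then descend back to $E$ using the local reflexivity of $E$. Fix an $n$-dimensional subspace $M \subseteq E$. First I would choose an $n$-dimensional subspace $N \subseteq E^*$ that norms $M$, i.e. with $E^* = N \oplus M^\perp$; dualising this decomposition yields $E^{**} = M \oplus N^\perp$ (identifying $M$ with its canonical image in $E^{**}$) and, on the $E$-side, $E = M \oplus N_\perp$, where $N_\perp$ is the pre-annihilator of $N$. Applying the $(K,k(n))$-$\alpha^d$-OUAP of $E^*$ to $N$ then produces a finite-rank $S \in CB(E^*)$ with $\|S\|_{cb} \le K$, $\alpha^d(S) = \alpha(S^*) \le k(n)$, and $S|_N = \mathrm{id}_N$.

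Next I would pass to the adjoint $S^* \in CB(E^{**})$, which is again finite rank with $\|S^*\|_{cb} = \|S\|_{cb} \le K$ and $\alpha(S^*) = \alpha^d(S) \le k(n)$. The purpose of the norming choice is that $S|_N = \mathrm{id}_N$ forces $S^*x - x \in N^\perp$ for every $x \in M$, so $S^*$ fixes $M$ modulo $N^\perp$. To turn $S^*$ into an operator on $E$, set $F = M + \mathrm{range}\, S^* \subseteq E^{**}$, a finite-dimensional operator space, and invoke local reflexivity: since $CB(F,E^{**}) \to CB(F,E)^{**}$ is isometric, there is a cb map $r : F \to E$ with $\|r\|_{cb} \le 1 + \varepsilon$, with $r|_{F \cap E} = \mathrm{id}$, and preserving the dual pairing against a prescribed finite-dimensional $G \subseteq E^*$ with $G \supseteq N$. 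I would then define $T = r \circ S^* \circ \iota_E \in CB(E)$, where $\iota_E : E \hookrightarrow E^{**}$ is the canonical embedding, corestricting $S^*\iota_E$ to $F$. The norm bookkeeping is then routine: $\|T\|_{cb} \le \|r\|_{cb}\|S^*\|_{cb} \le (1+\varepsilon)K$, and since corestricting the target of $S^*\iota_E$ from $E^{**}$ to the subspace $F$ is a completely isometric inclusion, the $1$-injectivity of $\alpha$ gives $\alpha$ of that corestriction equal to $\alpha(S^*\iota_E) \le \alpha(S^*) \le k(n)$, whence $\alpha(T) \le \|r\|_{cb}\,\alpha(S^*\iota_E) \le (1+\varepsilon)k(n)$. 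This comparison across the inclusions $F \hookrightarrow E^{**}$ and $E \hookrightarrow E^{**}$ is precisely where $1$-injectivity of $\alpha$ is needed.

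The delicate point — and the step I expect to be the main obstacle — is obtaining $T|_M = \mathrm{id}_M$ \emph{exactly} while keeping the constants at $(K(1+\varepsilon), k(n)(1+\varepsilon))$. With $G \supseteq N$, the pairing-preservation of $r$ together with $S|_N = \mathrm{id}_N$ gives $\langle Tx - x, \xi \rangle = 0$ for all $x \in M$ and $\xi \in N$, so a priori $T$ fixes $M$ only modulo $N_\perp$. Removing this residual error is not automatic: a crude correction by the projection of $E = M \oplus N_\perp$ onto $M$ would inflate the cb-norm by the (uncontrolled) norm of that projection, destroying the clean constants. The correct route is to feed the exact-fixing requirement into the local reflexivity step from the outset — enlarging $G$ to be total over the relevant finite-dimensional subspace while exploiting that the error vectors $S^*x - x$ already lie in $N^\perp$ (so that $N \subseteq G$ is consistent with the pairing constraints) — and thereby force the residual error to vanish. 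Making this choice non-circular, since the error space itself depends on $r$ and hence on $G$, is the technical heart of the argument; it is exactly the content of Lemma 1 in \cite{UAP}, of which the present statement is the immediate generalisation, and once it is in place the $(1+\varepsilon)$ factors absorb all the remaining losses.
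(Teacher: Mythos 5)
Your set-up and norm bookkeeping are fine (your use of $1$-injectivity of $\alpha$ to compare $\alpha$ across the inclusions into $E^{**}$ is exactly how the paper uses it), but the proof has a genuine gap precisely at the step you flag, and your proposed repair is circular: you defer the ``technical heart'' to Lemma 1 of \cite{UAP}, whose operator-space generalisation \emph{is} the statement you are supposed to prove. Nothing in your argument closes it. Worse, it cannot be closed along the lines you sketch. Local reflexivity lets you choose $r : F \to E$ preserving the pairing against a prescribed \emph{finite-dimensional} $G \subseteq E^*$; it does not let you prescribe values such as $r(S^*\iota_E e_i) = e_i$, since that would amount to pairing preservation against all of $E^*$, and $S^*\iota_E e_i - \iota_E e_i$ is a genuinely nonzero element of $E^{**}$ whose image under $r$ you cannot force to vanish. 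The resulting error $Tx - x \in N_\perp$ ($x \in M$) is merely ``invisible to $N$''; it is not small in norm, so no perturbation with constants tending to $1$ can remove it, while composing with the projection of $E = M \oplus N_\perp$ onto $M$ costs an uncontrolled factor, as you yourself note.

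The missing idea in the paper is to never ask for exact fixing directly, but to run a Hahn--Banach separation argument producing an \emph{arbitrarily small} error, which a perturbation can then kill. Fix an Auerbach basis $(e_1,\dots,e_n)$ of $M$, let $\mathscr{R}$ be the convex set of finite-rank $T \in CB(E)$ with $\|T\|_{cb} \le K(1+\varepsilon)^{1/2}$ and $\alpha(T) \le k(n)(1+\varepsilon)^{1/2}$, and claim $(e_1,\dots,e_n)$ lies in the norm closure of $\{(Te_1,\dots,Te_n): T \in \mathscr{R}\}$ in $\ell_\infty^n(E)$. If not, separation yields $\xi_1,\dots,\xi_n \in E^*$ with $\sum_i \langle \xi_i, Te_i\rangle < \sum_i \langle \xi_i, e_i\rangle$ for all $T \in \mathscr{R}$; applying the $\alpha^d$-OUAP of $E^*$ to fix \emph{these} $\xi_i$ (rather than a norming subspace $N$ chosen in advance) and then local reflexivity on $R(S^*)$ gives $T_0 = \varphi\,\overline{S^*}\,i_E \in \mathscr{R}$ with $\langle \xi_i, T_0e_i\rangle = \langle \xi_i, e_i\rangle$, a contradiction. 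The separation step thus converts ``fix $M$ pointwise'' into ``match finitely many functionals'', which is all that dual OUAP plus local reflexivity can deliver. Exact fixing is then recovered cheaply: given $T \in \mathscr{R}$ with $\|Te_i - e_i\| \le \mu$, the map $T|_M$ is invertible with inverse $V$, $\|V\|_{cb} \le (1-n\mu)^{-1}$, and with $P$ a rank-$n$ projection onto $T(M)$, $\|P\|_{cb}\le n$, and $J: T(M) \to E$ the inclusion, the operator $Q = 1 - P + VP$ satisfies $\|Q\|_{cb} \le 1 + \|V - J\|_{cb}\|P\|_{cb} \le 1 + n^2\mu/(1-n\mu)$, so $F = QT$ fixes $M$ exactly. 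The feature your construction lacks is exactly this smallness of the error in norm: only because $\mu$ can be taken arbitrarily small does the $n$-dependent projection norm get absorbed into the factor $(1+\varepsilon)$, giving the stated $(K(1+\varepsilon), k(n)(1+\varepsilon))$.
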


\begin{proof}
Assume $E^*$ has $(K,k(n))$ - $\alpha^d$ - OUAP. Let $M$ be an $n$-dimensional subspace of $E$, fix $(e_1, \cdots, e_n)$ an Auberbach basis of $M$, i.e., a basis such that
\begin{displaymath}
\max_i | \lambda_i | \leq \| \sum_i \lambda_i e_i \| \leq \sum_i | \lambda_i |
\end{displaymath}
for all scalars $\lambda_i$. With the dual basis, it is easy to see that  
\begin{displaymath}
\max_i \n a_i \n \leq \n \sum_i a_i \otimes e_i \n_{min} \leq \sum_i \n a_i \n
\end{displaymath}
for all elements $a_i$ in some operator space $G$. Fix $\varepsilon > 0$ and define
\begin{align*}
\mathscr{R}  & = \bigg \{ T \in CB(E): \n T \ncb \leq K(1+\varepsilon)^{1/2}, \\ & \qquad \qquad \alpha(T) \leq k(n)(1+ \varepsilon)^{1/2},     \rank T < \infty \bigg\}
\end{align*}
and
\begin{displaymath}
\mathscr{C} = \bigg \{ (Te_i, \cdots, Te_n) : T \in \mathscr{R} \bigg \} \subset \ell_{\infty}^n( E ).
\end{displaymath}
We claim first that $(e_1, \cdots, e_n) \in \overline{\mathscr{C}}$, the norm closure of $\mathscr{C}$ in $\ell_{\infty}^n( E)$. Otherwise, since $\mathscr{C}$ is convex, and $\ell_{\infty}^n(E)$ has as dual space $\ell_1^n(E^*)$, by Hahn-Banach separating theorem, there exist $\xi_1, \cdots, \xi_n$ in $E^*$, such that
\begin{displaymath}
\sum_i(\xi_i, Te_i) < \sum_i (\xi_i, e_i), \quad \forall T \in \mathscr{R}.
\end{displaymath}
Since $E^*$ has $\Big( K, k(n)\Big)$ - $\alpha^d$ - OUAP, we can find an finite rank operator $S \in CB(E^*)$, such that $$\n S\ncb \leq K, \alpha^d(S) \leq k(n) \quad\text{and} \quad S\xi_i = \xi_i \text{\, for all\, } i = 1, \cdots, n.$$ Since $E$ is locally reflexive, the range of $S^*$ is a finite dimensional subspace  $R(S^*)$ of $E^{**}$, we can find an operator $\varphi : R(S^*) \rightarrow E$, such that $$\n \varphi \ncb \leq (1+ \varepsilon)^{1/2}$$  and $$ (\varphi(x), \xi_i) = (x, \xi_i) \text{ for all } i = 1, \cdots, n \text{ and } x \in R(S^*).$$ Let us denote by $\overline{S^*}$ when $S^*$ is considered as an operator $E^{**} \rightarrow R(S^*)$. Since $\alpha$ is 1-injective, $$\alpha(\overline{S^*}) = \alpha (S^*) = \alpha^d (S) \leq k(n).$$ Let $T_0$ be the composition of the following applications: 
\begin{displaymath}
\begin{CD}
T_0 : E @>i_E>> E^{**} @>\overline{S^*}>> R(S^*) @>\varphi>> E,
\end{CD}
\end{displaymath}
where $i_E$ is the canonical inclusion. We have $$\n T_0 \ncb \leq \n i_E \ncb \n \overline{S^*} \ncb \n \varphi \ncb \leq K(1+ \varepsilon)^{1/2}$$ and $$\alpha(T_0) \leq \n i_E \ncb \alpha (\overline{S^*})\n \varphi \ncb \leq k(n)(1+\varepsilon)^{1/2},$$ consequently $T_0 \in \mathscr{R}$. Moreover $$(\xi_i, T_0e_i) = (\xi_i, \varphi(S^*(e_i)) = (\xi_i, S^*e_i) = (S\xi_i, e_i) = (\xi_i, e_i),$$ and hence $T_0$ satisfies
\begin{displaymath}
\sum_i(\xi_i, T_0e_i) = \sum_i (\xi_i, e_i),
\end{displaymath}
we get a contradiction.

Now we have proved $(e_1, \cdots, e_n) \in \overline{\mathscr{C}}$, for any $\mu > 0$, we can find $T \in \mathscr{R}$, such that $\n Te_i - e_i \n \leq \mu$. When $\mu$ is chosen to be small enough, $T|_M : M \rightarrow T(M)$ is invertible with inverse $V: T(M) \rightarrow M$. For any $n$-tuple $(a_i)$ in the operator space $ \mathscr{K}=\mathscr{K}(\ell_2)$, we have
\begin{eqnarray*}
\Big \n \sum_i a_i \otimes T(e_i) \Big\n_{min} &\geq &\Big\n \sum_i a_i \otimes e_i \Big\n_{min} - \Big\n \sum_i a_i \otimes (T(e_i) - e_i) \Big\n_{min} \\
& \geq &\Big\n \sum_i a_i \otimes e_i \Big\n_{min} - \mu \sum_i \n a_i \n \\
&\geq & \Big\n a_i \otimes e_i\Big \n_{min} - n\mu \sup_i \n a_i\n \\
&\geq &(1-n\mu) \Big\n \sum_i a_i \otimes e_i\Big \n_{min}, 
\end{eqnarray*}
which implies that $$id_\mathscr{K} \otimes V :  \mathscr{K} \om T(M) \rightarrow \mathscr{K} \om M$$ has norm less than $\frac{1}{1-n\mu}$, hence $$\n V \ncb \leq \frac{1}{1- n\mu}.$$ Let $P$ be a projection from $E$ onto $T(M)$, such that $\n P \ncb \leq n$, for example, let us denote by $(x_1, \cdots, x_n)$ an Auerbach basis for $T(M)$, and $(x_1^*, \cdots, x_n^*)$ its dual basis in $T(M)^*$, we can norm preservingly extend $x_i^*$, so that $x_i^*$ can be viewed as an element in $E^*$, then the projection $P$ defined by $$P e = \sum_i x_i^*(e)x_i, \quad \text{for all $e \in E$}$$ has c.b. norm less than $n$. We have the following commutative diagram: 
\begin{displaymath}
\begin{CD}
E @>T>> E = T(M) \oplus E_1 @>Q >> E\\
@AA{\texttt{inclusion}}A @VV {P}V @AA{\texttt{inclusion}}A\\
M @> T>> T(M) @> V>> M,
\end{CD}
\end{displaymath}
where $E_1 = \ker P$ and  $T(M) \oplus E_1$ is an algebraic direct sum, $Q$ is defined by $$Q = 1- P + VP.$$ Hence we have  $Q|_{T(M)} = V$ and $Q|_{E_1}$ is the inclusion of $E_1$ into $E$. Now let $F = QT$, then $$F|_M = id_M, \, \rank F \leq \rank T < \infty.$$ Let $J : T(M) \rightarrow E$ be the inclusion map and let $VP - P$ be the composition of the following maps: 
\begin{displaymath}
\begin{CD}
E @> P >> T(M) @> V -  J >> E.
\end{CD}
\end{displaymath} We have $$\n Q \ncb \leq 1 + \n VP - P \ncb.$$ Consider the map $$id_\mathscr{K} \otimes (V - J) : \mathscr{K}  \om T(M) \rightarrow \mathscr{K}  \om E.$$  We have  
\begin{align*}
\Big \n \sum_i a_i \otimes (e_i - Te_i) \Big \n_{min} &\leq \mu \sum_i \n a_i \n \leq n \mu \sup_i \n a_i \n  \\ &  \leq n\mu \Big \n \sum_i a_i \otimes e_i \Big \n_{min} \\
&\leq  n \mu \n V \ncb \Big \n \sum_i a_i \otimes Te_i \Big \n_{min} \\
& \leq  \frac{n\mu}{1-n\mu} \Big \n \sum_i a_i \otimes Te_i \Big \n_{min}, 
\end{align*}
which implies that $\n V - J \ncb \leq \frac{n \mu}{1 - n \mu}$. Hence $$\n Q \ncb \leq 1 + \frac{ n^2 \mu}{1 - n \mu},$$ when $\mu$ is small enough, we have $\n Q \ncb \leq (1 +\varepsilon)^{1/2}$, consequently we have $$\n F \ncb \leq K(1+ \varepsilon) \text{ and }\alpha(F) \leq k(n)(1 + \varepsilon).$$
\end{proof}


We now list some properties about $(2,oh)$-summing norm (see \cite{OH} p.88-p.89 for details).
\begin{enumerate}
\item[(i)] For any operator $u: OH \rightarrow E$ we have $$\pi_{2,oh}(u) = \pi_2(u).$$
\item[(ii)] Any operator $u: E \rightarrow OH$ which is $(2,oh)$-summing is necessarily completely bounded and we have $$\n u \ncb \leq \pi_{2,oh}(u).$$
\item[(iii)] Let $M$ be any $n$-dimensional operator space, then there is an isomorphism $u: M \rightarrow OH_n$, such that $$\pi_{2,oh}(u) = n^{1/2}, \quad \n u^{-1} \ncb = 1.$$
\end{enumerate} 

Let $E, F$ be two operator spaces. For any linear map $T: E \rightarrow F$, we define a number $\delta(T) \in [ 0, \infty ]$ as: 
\begin{displaymath}
\delta(T) = \inf\left\{ \n v \ncb \pi_{2, oh}(w) \right\}, 
\end{displaymath}
where the infimum runs over all possible factorizations of $T$ through some operator Hilbert space $OH(I)$ as following: 
\begin{align}\label{factorization}
\xymatrix{ &OH(I) \ar[dr]^v&\\
E \ar[ur]^w \ar[rr]^-T &&F
}.
\end{align}

\begin{prop}
$\delta$ is an 1-injective operator ideal norm. 
\end{prop}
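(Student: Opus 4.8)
The plan is to verify, in turn, the four defining features of a 1-injective operator ideal norm: that $\delta(\cdot)$ is a norm on each space of operators, that it satisfies the ideal inequality $\delta(BTA) \le \n B\ncb\, \delta(T)\, \n A\ncb$, that it is correctly normalised, and finally that it is 1-injective. A convenient preliminary observation is that property (ii) gives, for any factorisation $T = vw$ as in \eqref{factorization}, the bound $\n T\ncb \le \n v\ncb\, \n w\ncb \le \n v\ncb\, \pi_{2,oh}(w)$, whence $\n T\ncb \le \delta(T)$. This at once yields definiteness ($\delta(T)=0 \Rightarrow T=0$), while homogeneity is clear, since in any factorisation a scalar may be transferred between $v$ and $w$.

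For the ideal property I would take a factorisation $T = vw$ through $OH(I)$ together with completely bounded $A : E_0 \to E$ and $B : F \to F_0$, and simply compose: $BTA = (Bv)(wA)$ is a factorisation of $BTA$ through the same $OH(I)$. Here $\n Bv\ncb \le \n B\ncb\, \n v\ncb$ is immediate, while $\pi_{2,oh}(wA) \le \pi_{2,oh}(w)\, \n A\ncb$ follows from the definition of the $(2,oh)$-summing norm once one notes that $\sum_i (wA)(y_i)\otimes \overline{(wA)(y_i)} = (A\otimes \bar A)\big(\sum_i y_i \otimes \bar{y_i}\big)$ and that $\n A\otimes \bar A : E_0\om \overline{E_0} \to E\om \overline{E}\,\ncb \le \n A\ncb^2$. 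Passing to the infimum over factorisations gives $\delta(BTA) \le \n B\ncb\,\delta(T)\,\n A\ncb$. Normalisation is checked on the scalar identity: $OH$ on a one-point set is $\C$ with $\pi_{2,oh}(\mathrm{id}_\C)=1$, so $\delta(\mathrm{id}_\C) \le 1$, and the reverse inequality is the already-noted $\n\cdot\ncb \le \delta$.

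The substantive point is subadditivity. Given factorisations $T_j = v_j w_j$ through $OH(I_j)$ for $j=1,2$, I would build a factorisation of $T_1 + T_2$ through the $\ell_2$-direct sum $OH(I_1)\oplus_2 OH(I_2)$, which is again an operator Hilbert space $OH(I_1 \sqcup I_2)$. Using weights $\alpha_j,\beta_j$ with $\alpha_j\beta_j = 1$, set $w(x) = (\alpha_1 w_1 x, \alpha_2 w_2 x)$ and $v(y_1,y_2) = \beta_1 v_1 y_1 + \beta_2 v_2 y_2$, so that $vw = T_1 + T_2$. The two estimates needed are a ``column'' bound $\pi_{2,oh}(w)^2 \le \alpha_1^2\,\pi_{2,oh}(w_1)^2 + \alpha_2^2\,\pi_{2,oh}(w_2)^2$, immediate from the defining inequality since the squared norm on $OH(I_1)\oplus_2 OH(I_2)$ splits additively, and a ``row'' bound $\n v\ncb^2 \le \beta_1^2\,\n v_1\ncb^2 + \beta_2^2\,\n v_2\ncb^2$. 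Optimising the weights by $\alpha_j^2 = \n v_j\ncb/\pi_{2,oh}(w_j)$ collapses the product $\n v\ncb\,\pi_{2,oh}(w)$ to $\n v_1\ncb\,\pi_{2,oh}(w_1) + \n v_2\ncb\,\pi_{2,oh}(w_2)$, giving $\delta(T_1+T_2) \le \delta(T_1)+\delta(T_2)$ after passing to the infimum. I expect the row bound to be the main obstacle: unlike for Hilbertian factorisation norms, it rests on the special self-dual structure of $OH$, and I would establish it by dualising, writing $\n v\ncb = \n v^*\ncb$ and invoking the complete isometry $OH(I)^* = \overline{OH(I)}$ to turn the row map $v$ into a column map $v^*$ into an $OH$-direct sum, for which the column $cb$-estimate is available.

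Finally, for 1-injectivity, let $u : E\to F$ and let $i : F\hookrightarrow G$ be completely isometric. One inequality, $\delta(iu)\le\delta(u)$, is the ideal property with $\n i\ncb = 1$. For the reverse, take any factorisation $iu = vw$ through $OH(I)$ and restrict to $H_0 := \overline{w(E)}$, which is itself completely isometric to some $OH(I_0)$ because every subspace of an operator Hilbert space is again an operator Hilbert space; restricting the codomain of $w$ to $H_0$ leaves $\pi_{2,oh}(w)$ unchanged, as this quantity depends only on the unaltered values $\n w(x_i)\n$ and on the tensor norm in $E$. Since $v(w(x)) = iu(x) \in i(F)$ for every $x$ and $i(F)$ is closed, $v$ maps $H_0$ into $i(F)$; composing the restriction $v|_{H_0}$ with the complete isometry $i^{-1} : i(F)\to F$ produces $\tilde v : H_0 \to F$ with $\n \tilde v\ncb \le \n v\ncb$ and $u = \tilde v\,\tilde w$, where $\tilde w$ denotes $w$ with codomain restricted to $H_0$. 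Taking the infimum gives $\delta(u)\le\delta(iu)$, completing the proof.
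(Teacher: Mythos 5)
Your proposal is correct and, in outline, it coincides with the paper's own proof: the domination $\n T\ncb \le \delta(T)$, the ideal property by composing factorizations, subadditivity by gluing two factorizations into one through $OH(I_1)\oplus_2 OH(I_2)=OH(I_1\sqcup I_2)$, and 1-injectivity by cutting the Hilbert space down to $\overline{w(E)}$ (your version of this last step is in fact slightly more careful than the paper's, since you spell out why $v$ maps $\overline{w(E)}$ into $i(F)$). Your weight optimization $\alpha_j^2=\n v_j\ncb/\pi_{2,oh}(w_j)$ is exactly equivalent to the paper's device of pre-normalizing each factorization so that $\n v_j\ncb=\pi_{2,oh}(w_j)\le\sqrt{\delta(T_j)+\varepsilon}$. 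The only genuine divergence is the ``row'' bound. The paper proves $\n A\ncb^2\le\n v_1\ncb^2+\n v_2\ncb^2$ directly from Pisier's identity for maps \emph{defined on} $OH$, namely $\n A\ncb^2=\sup_J\big\n\sum_{i\in J}A(e_i)\otimes\overline{A(e_i)}\big\n_{F\om\overline F}$ (supremum over finite subsets of an orthonormal basis), together with the triangle inequality. You instead dualize ($\n v\ncb=\n v^*\ncb$ and $OH(I)^*=\overline{OH(I)}$) and appeal to a ``column cb-estimate'' for maps \emph{into} an $OH$-sum, which you declare available but do not prove --- and this is exactly where all the $OH$-specific content of the proposition is concentrated, so it should not be waved through. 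Fortunately it is true and follows in one line from the fundamental formula for the operator space structure of $OH$ in \cite{OH}: writing $(id\otimes u_j)(x)=\sum_{i\in I_j}a_i\otimes e_i$ for $x\in M_n(X)$, one has $\n(id\otimes(u_1,u_2))(x)\n^2_{M_n(OH(I_1\sqcup I_2))}=\big\n\sum_{i\in I_1\sqcup I_2}a_i\otimes\overline{a_i}\big\n_{M_n\om\overline{M_n}}\le\n(id\otimes u_1)(x)\n^2+\n(id\otimes u_2)(x)\n^2$ by the triangle inequality. Note that your instinct that duality must enter somewhere is sound: the analogous naive argument for the row bound would need the sub-sum inequality $\big\n\sum_{i\in I_1}a_i\otimes\overline{a_i}\big\n\le\big\n\sum_{i\in I_1\sqcup I_2}a_i\otimes\overline{a_i}\big\n$, which is not a triangle inequality and is itself a nontrivial consequence of the self-duality of $OH$. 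With that one justification supplied, your proof is complete; both your route and the paper's ultimately rest on the same self-duality theorem for $OH$ from \cite{OH}.
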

\begin{proof}
If $T: E\rightarrow F$ has a factorization $T = vw$ as in \eqref{factorization} with $$\n v \ncb \pi_{2,oh}(w) < \infty,$$ then $$\n T \ncb \leq \n v \ncb \n w \ncb \leq \n v \ncb \pi_{2,oh}(w),$$ by definition of $\delta(T)$, we have $$\n T \ncb \leq \delta(T).$$ It is easy to verify that if  $$S: \xymatrix{L\ar[r]^{\alpha} & E \ar[r]^ T & F \ar[r]^{\beta} &G},$$ then we have $$\delta(S) = \delta(\beta T \alpha) \leq \n \beta \ncb \delta (T) \n \alpha \ncb.$$ Assume that $i: F \rightarrow G$ is an completely isometry, such that we have $$\xymatrix{&OH(I)\ar[dr]^v&\\ E\ar[ur]^w\ar[r]^T&F\ar@{^{(}->}[r]^i &G}.$$ Let $\overline{R(w)}$ be the closure of the range of $w$ in $OH(I)$, then there is some index set $J$ such that we have an identification $$\overline{R(w)} = OH(J)$$ completely isometrically. Now we define  $$\tilde{w}: E \rightarrow \overline{R(w)}=OH(J)$$ given by  $$\tilde{w}(e)  = w(e), \quad \text{for any $e \in E$ }.$$ Since $i \circ T = v \circ w = \tilde{v}\circ \tilde{w}$, the range of the $v|_{OH(J)}$ is contained in $F$, we denote by $\tilde{v}: OH(J) \rightarrow F$ the mapping given by $$\tilde{v}(x)  = v(x), \quad \text{for any $x \in OH(J).$} $$ Then $T = \tilde{v} \circ \tilde{w}$, so we find $$\delta(T) \leq \n\tilde{v}\ncb \pi_{2,oh}(\tilde{w}) \leq \n v\ncb \pi_{2,oh}(w),$$ and thus $\delta(T) \leq \delta(i\circ T)$. The inverse inequality has already been shown, thus $\delta$ is 1-injective.

We show now that $\delta$ satisfies the triangle inequality. Let $T_1,T_2 : E \rightarrow F$ be two operators with $\delta(T_1), \delta(T_2)$ finite. For any $\varepsilon > 0$, we can factorize $T_i$ as $$T_i: \xymatrix{E\ar[r]^{w_i}& OH(I_i) \ar[r]^{v_i}&F},$$ such that $$\n v_i \ncb = \pi_{2,oh}(w_i) \leq \sqrt{ \delta(T_i)+\varepsilon}, \quad \text{for} \quad  i= 1,2,$$ where $I_1$ and $I_2$ two disjoint index sets. We imbed  $OH(I_i)$ canonically into $OH(I_1\cup I_2)=OH(I_1) \oplus OH(I_2)$, and denote the inclusions by $$J_i: OH(I_i) \rightarrow OH(I_1\cup I_2).$$ Let $P_i$ denote the orthogonal projection from $OH(I_1\cup I_2)$ onto $OH(I_i)$ respectively. Then $$T_1 + T_2 = v_1w_1+v_2w_2=AB,$$ where $B: E \rightarrow OH(I_1 \cup I_2)$ is defined by $$B(x) = J_1w_1(x)+J_2w_2(x)$$ and $A: OH(I_1 \cup I_2) \rightarrow F$ is defined by $$A(y)= v_1J_1^{-1}P_1(y)+v_2J_2^{-1}P_2(y).$$ For all finite sequences $(x_i)$ in $E$, we have 
\begin{align*}
\sum \n B(x_i) \n^2 & =  \sum \n J_1w_1(x_i)+J_2w_2(x_i)\n^2 \\ & = \sum \n w_1(x_i)\n^2+\n w_2(x_i)\n^2 \\ &\leq  \left(\pi_{2,oh}(w_1)^2+\pi_{2,oh}(w_2)^2\right) \left\n \sum x_i\otimes \overline{x_i} \right\n_{E\om\overline{E}} \\
&\leq \left(\delta(T_1)+\delta(T_2)+2\varepsilon \right)\left\n x_i\otimes\overline{x_i}\right\n_{E\om\overline{E}}.
\end{align*}
So we have $$\pi_{2,oh}(B) \leq \sqrt{\delta(T_1)+\delta(T_2)+2\varepsilon}.$$ For the c.b. norm of $A$, assume that $(T_{i_1})_{i_1\in I_1}$ and $(T_{i_2})_{i_2 \in I_2}$ are normalised orthogonal basis for $OH(I_1)$ and $OH(I_2)$ respectively. Then
\begin{align*}
\n A\ncb^2 =& \sup\Bigg\{ \Big \n \sum_{i_1 \in J_1} A(T_{i_1})\otimes \overline{A(T_{i_1})}+\sum_{i_2\in J_2} A(T_{i_2})\otimes \overline{A(T_{i_2})} \Big\n_{F\om\overline{F}}: \\ &  \quad \qquad  \qquad J_1 \subset I_1,|J_1|<\infty; J_2 \subset I_2,|J_2|<\infty\Bigg\}\\
\leq & \sup_{J_1 \subset I_1, |J_1|<\infty}\left\n \sum_{i_1 \in J_1} v_1(T_{i_1})\otimes \overline{v_1(T_{i_1})} \right\n_{F\om\overline{F}} +\\ & +  \sup_{J_2 \subset I_2, |J_2|<\infty} \left\n \sum_{i_2\in J_2} v_2(T_{i_2})\otimes \overline{v_2(T_{i_2})} \right\n_{F\om\overline{F}}\\
=&\n v_1\ncb^2+\n v_2\ncb^2 \leq \delta(T_1)+\delta(T_2)+2\varepsilon.
\end{align*}
By the definition of $\delta$, we have $$\delta(T_1+T_2)\leq \delta(T_1)+\delta(T_2)+2\varepsilon$$ for any $\varepsilon$, hence we get $$\delta(T_1+T_2) \leq \delta(T_1)+\delta(T_2),$$ as desired.
\end{proof}

\begin{prop}
For any finite rank operator $T: E\rightarrow F$, we have $$\delta(T)\leq \n T\ncb\sqrt{\rank T}.$$
\end{prop}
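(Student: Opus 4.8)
The plan is to produce an explicit factorization of $T$ through a \emph{finite-dimensional} operator Hilbert space and then to estimate the two factors directly against the infimum defining $\delta$. Write $n = \rank T$ and let $M = T(E) \subset F$ be the range of $T$, which is an $n$-dimensional operator subspace of $F$. First I factor $T$ as $T = J \circ T_0$, where $T_0 : E \rightarrow M$ is the map $T$ with its range restricted to $M$, and $J : M \hookrightarrow F$ is the inclusion. Since $J$ is a complete isometry, restricting the range does not change the c.b.\ norm, so $\n T_0 \ncb = \n T \ncb$.

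Next I invoke property (iii) applied to the $n$-dimensional operator space $M$: there is an isomorphism $u : M \rightarrow OH_n$ with $\pi_{2,oh}(u) = n^{1/2}$ and $\n u^{-1} \ncb = 1$. I set $w = u \circ T_0 : E \rightarrow OH_n$ and $v = J \circ u^{-1} : OH_n \rightarrow F$, so that $v \circ w = J \circ u^{-1} \circ u \circ T_0 = J \circ T_0 = T$ is a factorization of exactly the shape in \eqref{factorization}, with $OH(I) = OH_n$. The factor $v$ is controlled immediately: because $J$ is completely isometric, $\n v \ncb \leq \n J \ncb \, \n u^{-1} \ncb = 1$.

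The main work, and the only genuine obstacle, is to bound $\pi_{2,oh}(w)$, for which I claim the ideal-type estimate $\pi_{2,oh}(u \circ T_0) \leq \pi_{2,oh}(u)\, \n T_0 \ncb$. For any finite sequence $(x_i)$ in $E$, the definition of $\pi_{2,oh}(u)$ applied to the vectors $T_0 x_i \in M$ gives $\sum_i \n w(x_i) \n^2 = \sum_i \n u(T_0 x_i) \n^2 \leq \pi_{2,oh}(u)^2 \, \n \sum_i T_0 x_i \otimes \overline{T_0 x_i} \n_{M \om \overline{M}}$. The key technical point is the comparison $\n \sum_i T_0 x_i \otimes \overline{T_0 x_i} \n_{M \om \overline{M}} \leq \n T_0 \ncb^2 \, \n \sum_i x_i \otimes \overline{x_i} \n_{E \om \overline{E}}$, which holds because $\sum_i T_0 x_i \otimes \overline{T_0 x_i}$ is precisely the image of $\sum_i x_i \otimes \overline{x_i}$ under the map $T_0 \otimes \overline{T_0}$, and this map is completely bounded on the minimal tensor products with $\n T_0 \otimes \overline{T_0} \ncb \leq \n T_0 \ncb \, \n \overline{T_0} \ncb = \n T_0 \ncb^2$. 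Combining the two displayed inequalities yields $\pi_{2,oh}(w) \leq n^{1/2} \n T_0 \ncb = n^{1/2} \n T \ncb$.

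Finally, since $\delta(T)$ is an infimum over all factorizations of the form \eqref{factorization}, the particular factorization $T = v \circ w$ gives $\delta(T) \leq \n v \ncb \, \pi_{2,oh}(w) \leq 1 \cdot n^{1/2} \n T \ncb = \n T \ncb \sqrt{\rank T}$, which is the desired bound.
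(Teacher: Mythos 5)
Your proof is correct and takes essentially the same approach as the paper: factor $T$ through its $n$-dimensional range and apply property (iii) of the $(2,oh)$-summing norm to that range. The only difference is one of packaging — where the paper simply cites the ideal property of $\delta$ (from the preceding proposition) together with $\delta(id_{R(T)}) \leq \sqrt{\rank T}$, you unfold that step into an explicit factorization $T = (J \circ u^{-1}) \circ (u \circ T_0)$ through $OH_n$ and verify the composition inequality $\pi_{2,oh}(u \circ T_0) \leq \pi_{2,oh}(u)\, \n T_0 \ncb$ directly from the definition, which is a correct, slightly more self-contained rendering of the same argument.
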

\begin{proof}
We can factorize $T$ as following $$\xymatrix{E\ar[r]^T&R(T)\ar[r]^{id_{R(T)}}&R(T)\ar@{^{(}->}[r] &F}.$$ The property (iii) of the $(2,oh)$-summing norm gives that $$\delta(id_{R(T)})\leq \sqrt{\rank T}.$$ So we have $$\delta(T)\leq \n T \ncb \sqrt{\rank T}.$$ 
\end{proof}


\begin{rem}\label{equi}
If $E$ has the $(K,k(n))$-OUAP, then $E$ has the $$ (K,Kk(n)^{1/2})\text{-}\delta\text{-}\textrm{OUAP}$$ and also the $(K,Kk(n)^{1/2}$-$\delta^d$-OUAP. The following lemma shows that in fact the OUAP and the $\delta$-OUAP are equivalent.
\end{rem}

\begin{lem}\label{equi_lem}
If $E$ has $\Big(K,k(n)\Big)$-$\delta$-OUAP, then $E$ has $$\Big( \frac{1}{1-1/m}(1/m+K^{m+1}), m^{2/m}k(n)^{2+2/m} \Big)\text{-}\textrm{OUAP},$$ for all integers $m > 1$.
\end{lem}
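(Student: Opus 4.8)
The plan is to use the $OH$-factorization supplied by the $\delta$-OUAP to convert the Hilbert--Schmidt behaviour of the \emph{internal} operator into an honest rank bound, and then to restore the identity on $M$ by a Neumann inversion, whose only cost is a factor $\tfrac{1}{1-1/m}$. Concretely: given an $n$-dimensional $M\subset E$, fix (for an auxiliary $\varepsilon>0$) a finite rank $T\in CB(E)$ with $\|T\|_{cb}\le K$, $\delta(T)\le k(n)$ and $T|_M=\mathrm{id}_M$, together with a factorization $T=vw$, where $w\colon E\to OH(I)$ and $v\colon OH(I)\to E$ satisfy $\|v\|_{cb}\,\pi_{2,oh}(w)\le (1+\varepsilon)k(n)$. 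The key identity is $T^{m+1}=v(wv)^m w$. Writing $a:=wv\colon OH(I)\to OH(I)$ and noting that $a$ is a map \emph{out of} an operator Hilbert space, property (i) gives $\pi_{2,oh}(a)=\pi_2(a)$, while the ideal property of the $(2,oh)$-summing norm gives $\pi_{2,oh}(a)\le \pi_{2,oh}(w)\,\|v\|_{cb}\le (1+\varepsilon)k(n)$. Since $\pi_2$ between Hilbert spaces is the Hilbert--Schmidt norm and $OH$ is homogeneous (so $b_i=a_i$), the complete approximation numbers of $a$ must decay: $b_i(a)=a_i(a)\le (1+\varepsilon)k(n)/\sqrt{i}$.

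Next I would truncate the middle factor $a^m$. Submultiplicativity of the approximation numbers gives $b_{mi-m+1}(a^m)\le b_i(a)^m\le\big((1+\varepsilon)k(n)/\sqrt{i}\big)^m$, so one can select a finite rank $\tilde a$ of rank at most $m^{2/m}[(1+\varepsilon)k(n)]^{2+2/m}$ with $\|a^m-\tilde a\|_{cb}$ small enough that, after sandwiching, $S:=v\tilde a w$ obeys $\|T^{m+1}-S\|_{cb}\le \|v\|_{cb}\,\|a^m-\tilde a\|_{cb}\,\|w\|_{cb}\le 1/m$; here $\|w\|_{cb}\le\pi_{2,oh}(w)$ by property (ii), so the product $\|v\|_{cb}\|w\|_{cb}$ is controlled by $(1+\varepsilon)k(n)$. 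Then $\rank S=\rank\tilde a$ and $\|S\|_{cb}\le \|T^{m+1}\|_{cb}+1/m\le K^{m+1}+1/m$.

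Finally I would repair the identity on $M$ without paying a dimensional projection. Put $R:=T^{m+1}-S$, so $\|R\|_{cb}\le 1/m<1$ and $(\mathrm{id}_E-R)$ is invertible in the Banach algebra $CB(E)$ with $\|(\mathrm{id}_E-R)^{-1}\|_{cb}\le\tfrac{1}{1-1/m}$. Set $F:=(\mathrm{id}_E-R)^{-1}S$. For $x\in M$ one computes $(\mathrm{id}_E-R)x=x-T^{m+1}x+Sx=Sx$, using $T^{m+1}|_M=\mathrm{id}_M$, whence $Fx=(\mathrm{id}_E-R)^{-1}Sx=x$; thus $F|_M=\mathrm{id}_M$. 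Since $(\mathrm{id}_E-R)^{-1}$ is a bijection applied after $S$, we get $\rank F=\rank S\le m^{2/m}[(1+\varepsilon)k(n)]^{2+2/m}$, and $\|F\|_{cb}\le\tfrac{\|S\|_{cb}}{1-1/m}\le\tfrac{1}{1-1/m}\big(1/m+K^{m+1}\big)$. Letting $\varepsilon\to0$ yields the asserted constants.

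I expect the truncation to be the main obstacle: everything rests on upgrading the mere $\delta$-information on $T$ to genuine Hilbert--Schmidt decay of the internal operator $a=wv$, for which property (i) (coincidence of $\pi_{2,oh}$ with $\pi_2$ on maps out of $OH$) and the homogeneity of $OH$ are indispensable — a direct truncation of $w$ alone carries no such decay, which is exactly why the naive reduction forces the norm to grow with $n$. Some care is also needed in tracking the precise power of $m$ through the submultiplicative estimate $b_{mi-m+1}(a^m)\le b_i(a)^m$, and in verifying that the operator-norm and $cb$-norm best rank-$\rho$ approximations of $a^m$ coincide (valid since $OH$ is homogeneous), so that $\|a^m-\tilde a\|_{cb}=b_{\rho+1}(a^m)$ can be estimated through the singular values of $a$.
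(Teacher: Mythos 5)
Your overall architecture is the same as the paper's: factor $T=vw$ through $OH$, write $T^{m+1}=v(wv)^m w$, truncate to a finite rank $S$ with $\|T^{m+1}-S\|_{cb}\le 1/m$, and restore the identity on $M$ by inverting $\mathrm{id}_E-T^{m+1}+S$ via a Neumann series; the fixing computation on $M$, the norm bound $\frac{1}{1-1/m}(1/m+K^{m+1})$, and the use of homogeneity of $OH$ are all exactly as in the paper. However, there is a genuine quantitative gap at the truncation step --- precisely the point you flagged yourself. Submultiplicativity of approximation numbers controls $a_{mi-m+1}(a^m)\le a_i(a)^m\le (C/\sqrt{i})^m$ with $C=(1+\varepsilon)k(n)$, so for a best rank-$\rho$ truncation $\tilde a$ you must take $\rho+1=mi-m+1$, i.e.\ $i=\rho/m+1$: the decay you actually control is in $\rho/m$, not in $\rho$, namely $\|a^m-\tilde a\|=a_{\rho+1}(a^m)\le C^m m^{m/2}/(\rho+m)^{m/2}$. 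Demanding $\|v\|_{cb}\,\|a^m-\tilde a\|\,\|w\|_{cb}\le 1/m$ with $\|v\|_{cb}\|w\|_{cb}\le C$ then forces $\rho/m+1\ge m^{2/m}C^{2+2/m}$, i.e.\ $\rho\gtrsim m^{1+2/m}C^{2+2/m}$, which exceeds the claimed bound $m^{2/m}k(n)^{2+2/m}$ by a factor of essentially $m$ (for $m=2$: rank about $4k(n)^3$ instead of the asserted $2k(n)^3$). So the inequality chain you wrote does not deliver the constants in the statement.

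The missing ingredient is the Weyl--Horn inequality for Schatten quasi-norms, which is what the paper uses: from $\prod_{j\le k}s_j(a^m)\le\prod_{j\le k}s_j(a)^m$ one gets $\|a^m\|_{S_{2/m}}\le\|a\|_{S_2}^m$, hence
\begin{equation*}
s_{\rho+1}(a^m)\;\le\;\frac{\|a^m\|_{S_{2/m}}}{(\rho+1)^{m/2}}\;\le\;\frac{\|a\|_{S_2}^m}{(\rho+1)^{m/2}}\;\le\;\frac{C^m}{(\rho+1)^{m/2}},
\end{equation*}
a decay at index $\rho+1$ itself, with no $m^{m/2}$ loss. Feeding this into your own sandwich argument gives the requirement $\rho+1\ge m^{2/m}C^{2+2/m}$, and letting $\varepsilon\to 0$ (legitimate, since ranks are integers) recovers exactly the stated bound $m^{2/m}k(n)^{2+2/m}$. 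The repair is therefore local: replace the estimate $a_{mi-m+1}(a^m)\le a_i(a)^m$ by $\|(wv)^m\|_{S_{2/m}}\le\|wv\|_{S_2}^m$. Everything else in your proposal --- the identification $\pi_{2,oh}(wv)=\pi_2(wv)=\|wv\|_{S_2}$ via property (i) and the ideal property, the equality of operator-norm and cb-norm truncations on $OH$, the Neumann inversion, and the final constants --- is correct and coincides with the paper's argument (the paper truncates $T^{m+1}$ directly using $b_i(T^{m+1})\le\pi_{2,oh}(B)\,a_i((BA)^m)$ rather than truncating the middle factor and sandwiching, but this difference is cosmetic).
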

\begin{rem}
For simplification, here we replace the inequality $\delta(T) \leq k(n)$ in the definition of $\big(K,k(n)\big)$-$\delta$-OUAP by the strict inequality $\delta(T) < k(n)$,which of course is not an essential change.
\end{rem}
\begin{proof}
Assume $E$ has $\big(K, k(n) \big)$-$\delta$-OUAP. Fix an integer $m > 1$ and an $n$-dimensional subspace $M$ of $E$. Then we can find a finite rank operator $T: E \rightarrow E$, such that $$T|_E = id_E, \quad \n T \ncb \leq K \quad  \text{and} \quad \delta(T) < k(n).$$ By the definition of $\delta(T)$, we can factorize $T$ as: 
\begin{displaymath}
\xymatrix{
& OH \ar[dr]^ A&\\
E\ar[ur]^ B\ar[rr]^ T&& E
}
\end{displaymath}
such that $\pi_{2,oh}(B) < k(n)$ and $\n A \ncb \leq 1$. Since $$T^{m+1} = (AB)^ {m+1}=A(BA)^mB,$$ and $BA$ is an operator $OH \rightarrow OH$, we have 
\begin{align*}
b_i(T^{m+1})  & \leq \n A \ncb \n B \ncb b_i((BA)^m) \\ & = \n A \ncb \n B \ncb a_i((BA)^m) \\ & \leq \pi_{2,oh}(B)a_i((BA)^m).
\end{align*}
The sequence $(b_i(T))_{i\geq 1}$ is nonincreasing, so we have:
\begin{eqnarray*}
sup_i i^{m/2}b_i(T^{m+1}) & \leq & \left(\sum_i b_i(T^{m+1})^{2/m} \right)^{m/2}\\
& \leq & \pi_{2,oh}(B)\left(\sum_i a_i((BA)^m)^{2/m}\right)^{m/2}\\
& = & \pi_{2,oh}(B)\n(BA)^m\n_{S_{2/m}}\\
& \leq &\pi_{2,oh}(B)\n BA \n_{S_2}^m\\
& = & \pi_{2,oh}(B) \pi_2(BA)^m\\
& = & \pi_{2,oh}(B) \pi_{2,oh}(BA)^m\\
& \leq & \pi_{2,oh}(B)^{m+1}\\
& < & k(n)^{m+1},
\end{eqnarray*} 
where we have used the facts that the 2-summing norm and the Hilbert-Schmidt norm coincide for operators between to Hilbert spaces, the $(2,oh)$-summing norm and the 2-summing norm for operators from a Piser's operator Hilbert space $OH$ to some other Banach space coincide. Let $i_0$ be the smallest integer strictly greater than $m^{2/m}k(n)^{2+2/m}$, then $i_0^{m/2} \geq m k(n)^{m+1}$, so we have $b_{i_0}(T^{m+1}) < 1/m$. By the definition of $b_i(T)$, there exists $S: E \rightarrow E$, such that $$ \rank S < i_0 \quad \text{and}  \quad \n T^{m+1} - S\ncb < 1/m.$$ This implies that $$\rank S \leq m^{2/m}k(n)^{2+2/m}$$ and  that $id_E -T^{m+1} + S$ is invertible with an inverse $V$, whose c.b. norm satisfies $$\n V \ncb < \frac{1}{1-1/m}.$$ Consequently, if we define $$T_0 = VS: E \rightarrow E,$$ then $$\text{ $T_0|_M = id_M$ \,  and \, $\rank T_0 \leq \rank S \leq m^{2/m}k(n)^{2+2/m}$.} $$  For the c.b. norm of $T_0$, we have 
\begin{align*}
\n T_0 \ncb & \leq \frac{1}{1-1/m}\left(\n S - T^{m+1}\ncb + \n T^{m+1}\ncb \right)  \\ & \leq \frac{1}{1-1/m}\left(1/m + K^{m+1}\right),
\end{align*} 
this is exactly what we want.
\end{proof}

We will use the following proposition (cf.\cite{ultra_C_alg}).
\begin{prop}\label{ultra}
For an operator space $E$, there are an infinite set $I$ and a non-trivial ultrafilter $\mathscr{U}$ on $I$, a completely isometric embedding $j: E^{**} \rightarrow E^I/\mathscr{U}$, and $j(E^{**})$ is completely complemented in $E^I/\mathscr{U}$ (i.e. there is a completely contractive surjective projection $P: E^I/\mathscr{U} \rightarrow j(E^{**})$), such that we have the following commutative diagram:
$$\xymatrix{
E \ar[rr]^i\ar[dr]^{i_E} &&E^I/\mathscr{U}\\
&E^{**} \ar[ur]^-{j}
},$$ where $i$ and $i_E$ are canonical inclusions.
\end{prop}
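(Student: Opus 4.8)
The plan is to realize $E^{**}$ as a completely contractively complemented subspace of an operator space ultrapower of $E$, with the complementation furnished by a canonical weak$^*$-limit map. First I would fix the index set and ultrafilter. Let $I$ be the collection of quadruples $i = (F,G,N,\varepsilon)$, where $F \subset E^{**}$ and $G \subset E^*$ are finite-dimensional subspaces, $N \in \N$, and $\varepsilon > 0$; direct $I$ by declaring $i' \geq i$ when $F' \supseteq F$, $G' \supseteq G$, $N' \geq N$ and $\varepsilon' \leq \varepsilon$. Since $I$ is directed, the tails $\{i' : i' \geq i\}$ have the finite intersection property, so there is an ultrafilter $\mathscr{U}$ on $I$ containing all of them. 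With this $\mathscr{U}$ the ultrapower $E^I/\mathscr{U}$ is an operator space whose matrix norms are $\| [z_{kl}] \|_{M_n(E^I/\mathscr{U})} = \lim_{\mathscr{U}} \| [z_{kl}^i] \|_{M_n(E)}$ for $z_{kl} = (z_{kl}^i)_i/\mathscr{U}$, and the constant family gives the canonical complete isometry $i : E \to E^I/\mathscr{U}$.

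The analytic heart is a single-level approximation lemma: for each $i = (F,G,N,\varepsilon)$ there is a linear map $u_i : F \to E$ such that $u_i$ fixes $F \cap i_E(E)$, one has $|\langle g, u_i(x) - x \rangle| \leq \varepsilon \| x \|$ for all $g \in G$ and $x \in F$, and $\| u_i^{(N)} \|_{M_N(F) \to M_N(E)} \leq 1 + \varepsilon$. I would prove this by a Hahn--Banach separation argument carried out at the \emph{fixed} matrix level $N$, entirely parallel to the proof of Lemma \ref{dual_lemma}: one uses that $M_N(E)$ is weak$^*$-dense in its bidual $M_N(E)^{**} = M_N(E^{**})$ (matrix Goldstine) and separates against the finitely many linear constraints encoding the functionals of $G$, the exact fixing on $F \cap i_E(E)$, and the finitely many test matrices (enough by compactness of the unit ball of the finite-dimensional space $M_N(F)$) that witness the cb-norm at level $N$. \textbf{This is the step I expect to be the main obstacle}, and it is exactly where the generality ``for all operator spaces'' is bought: because only one matrix level $N$ enters per index, the argument does \emph{not} require $E$ to be locally reflexive (which would demand a single map succeeding at all levels simultaneously); instead the level $N$ is pushed to infinity through the directed set.

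Granting the lemma, define $j : E^{**} \to E^I/\mathscr{U}$ by $j(x) = (u_i(x))_i/\mathscr{U}$, with the convention $u_i(x) = 0$ when $x \notin F$; this is well defined and bounded since for fixed $x$ one has $x \in F$ eventually along $\mathscr{U}$. Linearity is clear, and $j$ is completely contractive: for $[x_{kl}] \in M_n(E^{**})$ the set of indices with $N \geq n$ and $F$ containing all $x_{kl}$ contains a tail, hence lies in $\mathscr{U}$, so monotonicity of matrix norms yields $\| j^{(n)}[x_{kl}] \| = \lim_{\mathscr{U}} \| [u_i(x_{kl})] \|_{M_n(E)} \leq \lim_{\mathscr{U}} (1+\varepsilon)\| [x_{kl}] \| = \| [x_{kl}] \|$. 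For the reverse inequality and the complementation I would introduce $Q : E^I/\mathscr{U} \to E^{**}$ sending $(y_i)_i/\mathscr{U}$ to the weak$^*$ limit of $(i_E(y_i))_i$ along $\mathscr{U}$; this ultralimit exists by weak$^*$-compactness of balls, is independent of the representative, and $Q$ is completely contractive because the matrix norm on $M_n(E^{**}) = M_n(E)^{**}$ is weak$^*$-lower semicontinuous.

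Finally I would combine the two maps. The approximate functional matching gives $\langle g, Qj(x)\rangle = \lim_{\mathscr{U}} \langle g, u_i(x)\rangle = \langle g, x\rangle$ for every $g \in E^*$, so $Q \circ j = \mathrm{id}_{E^{**}}$; hence $\| [x_{kl}] \| = \| Q^{(n)} j^{(n)}[x_{kl}] \| \leq \| j^{(n)}[x_{kl}] \| \leq \| [x_{kl}] \|$, proving $j$ is a complete isometry, and $P := j \circ Q$ is a completely contractive surjective idempotent with range $j(E^{**})$. For the diagram, note that because $u_i$ fixes $F \cap i_E(E)$, for $x \in E$ one has $u_i(i_E x) = x$ once $i_E x \in F$, which happens eventually along $\mathscr{U}$; therefore $j(i_E x) = (x)_i/\mathscr{U} = i(x)$, which is exactly the asserted commutativity, completing the proof.
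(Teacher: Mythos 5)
Your overall architecture is the right one, and it is worth noting that the paper itself offers no proof of Proposition \ref{ultra} at all (it only cites Ge--Hadwin, whose theorem concerns $C^*$-algebras), so a self-contained argument like yours is genuinely valuable: the directed set of quadruples $(F,G,N,\varepsilon)$, an ultrafilter containing the tails, $j(x)=(u_i(x))_{\mathscr{U}}$, the weak$^*$-limit map $Q$, the projection $P=jQ$, and the derivation of complete isometry of $j$ from $Qj=\mathrm{id}_{E^{**}}$ plus complete contractivity of $j$ and $Q$ are all correct. The gap is exactly where you flagged it: your single-level approximation lemma is true, but the proof you sketch for it does not close. ``Matrix Goldstine'' (weak$^*$-density of $M_N(E)$ in $M_N(E)^{**}=M_N(E^{**})$) produces approximating \emph{matrices}, whereas you need approximating \emph{linear maps of tensor form} $u^{(N)}=\mathrm{id}_{M_N}\otimes u$ with $\|u^{(N)}\|\leq 1+\varepsilon$. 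Concretely, if the separation fails you get a weak$^*$-continuous functional $\Lambda(v)=\sum_j\langle \phi_j, v^{(N)}(y_j)\rangle$, $\phi_j\in M_N(E)^*$, $y_j \in M_N(F)$, and to reach a contradiction you must produce a map $u:F\to E$ with controlled level-$N$ norm making $\sum_j\langle\phi_j,u^{(N)}(y_j)\rangle$ close to $\sum_j\langle\phi_j,y_j\rangle$; Goldstine applied to the matrices $y_j$ (or to a basis of $F$) gives elements of $M_N(E)$ that need not be values of any single such $u$ --- this is precisely the difficulty that makes operator-space local reflexivity fail in general. The claimed parallelism with Lemma \ref{dual_lemma} also does not help: there, the map contradicting the separation was supplied by the OUAP hypothesis on $E^*$; here no hypothesis supplies maps, so the contradiction must come from a functional-analytic identity you have not stated.

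The missing identity can be supplied in two ways, either of which completes your proof. (i) Fix a finite $\varepsilon$-net $y_1,\dots,y_T$ of the unit sphere of $M_N(F)$ (together with matrices encoding a basis of $F$), and consider the subspace $V=\{(u^{(N)}(y_1),\dots,u^{(N)}(y_T)): u\in L(F,E)\}$ of $X=\bigoplus_{t\leq T}M_N(E)$; a finite-dimensional annihilator computation shows $V^{\perp\perp}\subset X^{**}=\bigoplus_{t\leq T}M_N(E^{**})$ is exactly the corresponding space of tensor-form tuples with entries in $E^{**}$, so Goldstine applied to the \emph{subspace} $V$ (the ball of $V$ is weak$^*$-dense in the ball of $V^{\perp\perp}$) yields $u$ with $\max_t\|u^{(N)}(y_t)\|\leq 1$, and the net argument upgrades this to $\|u^{(N)}\|\leq(1-\varepsilon)^{-1}$; the finiteness of $T$ is essential, which is why the argument cannot be run at all levels simultaneously. (ii) Alternatively, apply the Banach-space principle of local reflexivity to $M_N(E)$, getting $w:M_N(F)\to M_N(E)$ with $\|w\|\leq 1+\varepsilon$ and the duality conditions, and then restore the module structure by averaging, $\tilde w(x)=\int_{U(N)\times U(N)}a^*w(axb)b^*\,da\,db$, which is an $M_N$-bimodule map, hence of the form $u^{(N)}$ with $\|u^{(N)}\|\leq\|w\|$. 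In either route there is a further point you glossed over: weak$^*$ approximation gives only $|\langle g, u(x)-x\rangle|\leq\varepsilon\|x\|$ on $F\cap i_E(E)$, i.e.\ weak smallness; to get the exact fixing you invoke for $j\circ i_E=i$, you must first upgrade to norm smallness via Mazur's theorem (the relevant sets are convex) and then perturb $u$ by a correction through a projection of $F$ onto $F\cap i_E(E)$ --- or simply observe that norm-approximate fixing with errors tending to $0$ along $\mathscr{U}$ already gives the commutativity of the diagram.
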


\begin{prop}
The class of operator spaces having the $(K,k(n))$-OUAP is stable under ultraproducts. In particular, if $E$ has the $(K,k(n))$-OUAP,then so does $E^{**}$.
\end{prop}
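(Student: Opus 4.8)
The plan is to prove stability under ultraproducts first, and then obtain the statement about $E^{**}$ as a special case via Proposition \ref{ultra}. For the ultraproduct step, let $(E_\iota)_{\iota \in I}$ be operator spaces each having the $(K,k(n))$-OUAP, let $\mathscr{U}$ be an ultrafilter on $I$, and let $\mathcal{E} = \prod_\iota E_\iota/\mathscr{U}$ be the operator space ultraproduct. Fix an $n$-dimensional subspace $M \subset \mathcal{E}$ with basis $(e_1, \dots, e_n)$ and choose representatives $e_j = (e_j^\iota)_\mathscr{U}$. A standard ultralimit-of-coefficients argument shows that $(e_j^\iota)_j$ is linearly independent for $\mathscr{U}$-almost all $\iota$, so $M_\iota := \mathrm{span}(e_1^\iota, \dots, e_n^\iota)$ is $n$-dimensional for $\mathscr{U}$-almost all $\iota$. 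For each such $\iota$ the $(K,k(n))$-OUAP of $E_\iota$ yields a finite rank $T_\iota \in CB(E_\iota)$ with $\|T_\iota\|_{cb} \leq K$, $\rank T_\iota \leq k(n)$ and $T_\iota|_{M_\iota} = id_{M_\iota}$; for the remaining $\iota$ set $T_\iota = 0$.

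Next I would glue the $T_\iota$ into the ultraproduct operator $T = (T_\iota)_\mathscr{U}$ on $\mathcal{E}$, given by $T((\xi_\iota)_\mathscr{U}) = (T_\iota \xi_\iota)_\mathscr{U}$. Since $\sup_\iota \|T_\iota\|_{cb} \leq K$, this map is well defined, and because matrix norms in $\mathcal{E}$ are computed as ultralimits of matrix norms in the $E_\iota$, it satisfies $\|T\|_{cb} \leq K$. On the basis, $Te_j = (T_\iota e_j^\iota)_\mathscr{U} = (e_j^\iota)_\mathscr{U} = e_j$, so $T|_M = id_M$.

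The main obstacle is verifying $\rank T \leq k(n)$, since rank is not preserved under arbitrary ultraproducts and one must exploit the uniform rank bound together with a normalisation. Put $r = \lfloor k(n) \rfloor$ and $R_\iota = R(T_\iota)$, so $\dim R_\iota \leq r$. After intersecting with a set in $\mathscr{U}$ we may assume $\dim R_\iota = d$ is constant with $d \leq r$, and I would take an Auerbach basis $(x_1^\iota, \dots, x_d^\iota)$ of $R_\iota$, so that $\|x_l^\iota\| = 1$ and every $f = \sum_l c_l x_l^\iota \in R_\iota$ satisfies $|c_l| \leq \|f\|$. The uniform coefficient bound shows that $\prod_\iota R_\iota/\mathscr{U}$ is spanned by the $d$ vectors $(x_l^\iota)_\mathscr{U}$, hence has dimension at most $d \leq r$. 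As $R(T) \subset \prod_\iota R_\iota/\mathscr{U}$, we conclude $\rank T \leq r \leq k(n)$, completing the proof that $\mathcal{E}$ has the $(K,k(n))$-OUAP.

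Finally, for the statement about $E^{**}$, I would invoke Proposition \ref{ultra}: there is an ultrapower $X = E^I/\mathscr{U}$, a completely isometric embedding $j : E^{**} \hookrightarrow X$, and a completely contractive projection $P : X \to j(E^{**})$. By the ultraproduct stability just established, $X$ has the $(K,k(n))$-OUAP, so it remains to observe that this property descends to the completely contractively complemented subspace $Y = j(E^{**})$. Given an $n$-dimensional $N \subset Y$, view it inside $X$, choose $T \in CB(X)$ with $\|T\|_{cb} \leq K$, $\rank T \leq k(n)$ and $T|_N = id_N$, and set $S = P\, T\, \eta$, where $\eta : Y \hookrightarrow X$ is the inclusion. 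Then $\|S\|_{cb} \leq \|P\|_{cb}\|T\|_{cb}\|\eta\|_{cb} \leq K$, $\rank S \leq \rank T \leq k(n)$, and for $y \in N$ we have $Sy = P(Ty) = Py = y$ since $Ty = y$ and $P$ fixes $Y$. Thus $Y \cong E^{**}$ has the $(K,k(n))$-OUAP.
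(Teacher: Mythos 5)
Your proposal is correct and follows essentially the same route as the paper: glue the finite-rank maps $T_\iota$ into the ultraproduct operator $T=(T_\iota)_\mathscr{U}$, and then descend to $E^{**}$ via the completely contractive projection from Proposition \ref{ultra}. The only difference is that you spell out two points the paper leaves implicit --- the Auerbach-basis argument bounding $\rank T$ by $k(n)$, and the compression $S=PT\eta$ showing the property passes to completely complemented subspaces --- both of which are correct.
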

\begin{proof}
Let $(E_i)_{i\in I}$ be a family of operator spaces having the $(K,k(n))$-OUAP, $\mathscr{U}$ an ultrafilter on $I$. We want to show that $\Pi_{i \in I } E_i/\mathscr{U}$ has the $(K,k(n))$-OUAP. For any $n$-dimensional subspace $$M \subset \Pi_{i \in I} E_i/\mathscr{U},$$ choose an algebraic basis $x^1,\cdots,x^n$ of $M$, with $x^k=(x^k_i)_{\mathscr{U}}$. Let $M_i$ be the linear span of $x^k_i$ for $k=1,\cdots, n$, obviously, we have $$M = \Pi_{i \in I } M_i/\mathscr{U}.$$ Since each $E_i$ has the $(K,k(n))$-OUAP, we can find $T_i : E_i \rightarrow E_i$ such that $$\n T_i\ncb \leq K,\quad \rank T_i \leq k(n) \quad \text{ and}  \quad T_i|_{M_i} = id_{M_i}.$$ Let $$T = (T_i)_{\mathscr{U}}: \Pi_{i \in I } E_i/\mathscr{U} \rightarrow \Pi_{i \in I } E_i/\mathscr{U},$$ then $$\n T\ncb \leq \lim_{\mathscr{U}} \n T_i \ncb \leq K, \, \rank T \leq k(n), \,T|_M = id_M.$$ According to Proposition \ref{ultra}, since $E^{**}$ is completely complemented in some ultrapower of $E$, it is easy to show $E^{**}$ has the $(K,k(n))$-OUAP when $E$ has it.
\end{proof}

\begin{proof}[Proof of Theorem \ref{mainthm}]
Assume that $E$ has the $(K,k(n))$-OUAP, then so does $E^{**}$. As in Remark \ref{equi}, $E^{**}$ has the $\Big(K,Kk(n)^{1/2}\Big)$-$\delta^d$-OUAP. If $E^*$ is locally reflexive, and since $\delta$ is 1-injective, then we can apply Lemma \ref{dual_lemma} to show that $E^*$ has $$\Big( K(1+\varepsilon), Kk(n)^{1/2}(1+ \varepsilon)\Big) \text{-} \delta \text{-} \textrm{OUAP},$$ for all $\varepsilon > 0$. Now by applying  Lemma \ref{equi_lem}, and get the desired result. The case from $E^*$ to $E$ is more direct without the argument of ultraproducts.
\end{proof}

It seems to be interesting to ask whether we can drop the assumption on local reflexivity  in Theorem \ref{mainthm}. The following question seems to be open. 

\begin{open}\label{o1}
Does the OUAP property of  $E$ (resp. $E^*$) imply that $E^*$ (resp. $E$) is locally reflexive? 
\end{open}

The above open problem is related to the following result of Ozawa, see section 4 of \cite{Ozawa-neb}.

\begin{prop}\label{ozawa}(Ozawa)
The CBAP property does not imply locally reflexivity.
\end{prop}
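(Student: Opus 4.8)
The plan is to produce a single $C^*$-algebra (equivalently, an operator space) $A$ that carries the CBAP and yet fails to be locally reflexive; such an $A$ is precisely a counterexample to the asserted implication, and since the statement is Ozawa's, what I would do is reconstruct his example at the level of its conceptual skeleton. The natural lever is Kirchberg's theorem that every exact $C^*$-algebra is locally reflexive: contrapositively, any witness must be non-exact, so I would search among non-exact algebras. The tension to keep in mind throughout is that the standard non-exact, non-locally-reflexive algebras such as $B(H)$ or the Calkin algebra are useless here, since they fail even the Banach-space approximation property (Szankowski), so CBAP cannot be read off an off-the-shelf example; it has to be engineered.

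First I would fix the mechanism that detects the failure of local reflexivity. By the definition recalled in the excerpt I need a finite-dimensional operator space $L$ together with a map in $CB(L, A^{**})$ whose norm strictly exceeds the norm it receives as an element of $CB(L, A)^{**}$; informally, some finite piece of $A^{**}$ cannot be locally recaptured inside $A$. Here I must be careful: Kirchberg's implication shows non-exactness is \emph{necessary}, but because non-exact algebras can still be locally reflexive, the real task is to exhibit an actual gap in the isometry $CB(L, A^{**}) \to CB(L, A)^{**}$, not merely to arrange non-exactness. So the target is an algebra whose bidual carries finite-dimensional configurations genuinely incompatible with $A$ under $CB(L, \cdot)$.

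Next I would build $A$ so that CBAP is manifest while the gap is present. The idea is to assemble $A$ from finite-dimensional building blocks — as a suitable inductive limit, amalgamated construction, or a carefully chosen extension $0 \to I \to A \to A/I \to 0$ — so that a uniformly completely bounded net of finite-rank maps approximating $id_A$ is inherited from the blocks, yielding the CBAP bound; simultaneously the way the blocks are glued is designed to force the above discrepancy into the bidual. Once such a model is fixed, the proof reduces to two verifications: the uniform c.b. bound on the finite-rank approximants, and the failure of the isometry for the chosen $L$, the latter being read off from the characterization of local reflexivity via $CB(L, \cdot)$.

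The main obstacle is exactly reconciling these two requirements, which pull in opposite directions: CBAP imposes a great deal of good local finite-dimensional structure on $A$, whereas failure of local reflexivity demands that $A^{**}$ behave incompatibly with $A$ under $CB(L, \cdot)$, i.e. that the relevant local structure \emph{not} factor back through $A$ itself. Controlling the c.b. norms of the approximants while keeping the gluing incompatible at the finite-dimensional level is the delicate point, and it is where Ozawa's specific choice of building blocks and identifications does the real work; I expect that, as in his construction, this is the step that cannot be shortcut, while both final verifications are then routine given the standard descriptions of CBAP and of local reflexivity.
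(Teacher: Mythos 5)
There is a genuine gap: your text is not a proof but a description of what a proof would have to accomplish. The proposition is an existence statement --- one must exhibit a concrete operator space (in practice a $C^*$-algebra) that has the CBAP and fails local reflexivity --- and you never exhibit one. What you supply is a correct list of constraints (by Kirchberg's theorem any example must be non-exact; by Szankowski's theorem $B(H)$ and the Calkin algebra are ruled out, since they fail even the Banach-space approximation property) together with a restatement of the two proof obligations (a uniformly completely bounded net of finite-rank approximants of the identity, and a finite-dimensional $L$ for which $CB(L,A^{**}) \rightarrow CB(L,A)^{**}$ fails to be isometric). You then acknowledge that the construction reconciling these demands is ``where Ozawa's specific choice of building blocks and identifications does the real work'' and ``cannot be shortcut'' --- that is, you omit precisely the step that constitutes the theorem. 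A secondary inaccuracy: your assertion that non-exact algebras ``can still be locally reflexive'' is unjustified (whether local reflexivity implies exactness is not settled), although your practical conclusion --- that establishing non-exactness alone would not finish the argument --- is correct for exactly that reason.

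For comparison, the paper itself does not reprove this statement: Proposition \ref{ozawa} is quoted as Ozawa's result, with the proof living in section 4 of \cite{Ozawa-neb}. Roughly, the example there is built from a non-locally-split extension by the compacts $\mathcal{K}$ of an exact $C^*$-algebra having the CBAP, such as $C^*_\lambda(\mathbb{F}_2)$ (CBAP by Haagerup); the CBAP is inherited by the extension algebra, whereas local reflexivity of the extension algebra would force the extension to be locally split by the Effros--Haagerup lifting theorem, a contradiction. A self-contained proof would have to carry out some such construction and both verifications; as written, your proposal stops exactly where the proof must begin, so it establishes nothing beyond what the statement already asserts.
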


\begin{rem}
After writing this note, the author was told by Pisier that in fact the ideal norm $\delta$ defined here coincides with the completely 2-summing norm $\pi_2^\circ$ (cf. \cite{Lp_non_comm}, p.62). 
\end{rem}

\section*{acknowledgment}
I would like to thank Gilles Pisier for inviting me to Texas A \& M university and for the valuable conservations on many different subjects. I am grateful to  Narutaka Ozawa for telling me that ''CBAP does not imply local reflexive''.  I would like to thank Rui Liu for his interests in this work and for many valuable discussions. I also would like to  thank Issan Patri for discussions in mathematics and cultures in India.

The author is supported by A*MIDEX project (No. ANR-11-IDEX-0001-02) and partially supported by the ANR grant 2011-BS01-00801.


\end{document}